\numberwithin{equation}{section}
\newtheorem{theorem}{Theorem}[section]
\newtheorem{example}{Example}[section]
\newtheorem{remark}{Remark}[section]
\newcommand{\sqr}[2]{{\vcenter{\vbox{\hrule height#2pt
                \hbox{\vrule width#2pt height#1pt \kern#1pt
                \vrule width#2pt}\hrule height#2pt}}}}
\newcommand{\beq}{\begin{equation}}
\newcommand{\eeq}{\end{equation}}
\newcommand{\beqar}{\begin{eqnarray}}
\newcommand{\eeqar}{\end{eqnarray}}
\def\beqars{\begin{eqnarray*}}
\def\eeqars{\end{eqnarray*}}
\newcommand{\smod}[1]{\hspace{-1mm} \pmod{#1}}
\newcommand{\qu}[2]{\Bigl({\frac{#1}{#2}}\Bigr) }
\newcommand{\dqu}[2]{\ds{\qu{#1}{#2}}}
\def \ds{\displaystyle}
\newcommand{\nn}{\mathbb{N}}
\newcommand{\zz}{\mathbb{Z}}
\newcommand{\qq}{\mathbb{Q}}
\newcommand{\cc}{\mathbb{C}}
\newcommand{\hh}{\mathbb{H}}
\begin{document}

\title{A family of Eta Quotients and an Extension of the Ramanujan-Mordell Theorem}

\author{Ay\c{s}e Alaca, \c{S}aban Alaca, Zafer Selcuk Aygin}

\maketitle

\markboth{AY\c{S}E ALACA, \c{S}ABAN ALACA, ZAFER SELCUK AYGIN}
{A FAMILY OF ETA QUOTIENTS AND THE RAMANUJAN-MORDELL THEOREM}

\begin{abstract}

Let $k\geq 2$ be an integer and $j$ an integer satisfying $1\leq j \leq 4k-5$. 
We define a family  $\{ C_{j,k}(z) \}_{1\leq j \leq 4k-5} $  of eta quotients, 
and prove that this family constitute a basis for the space $S_{2k} (\Gamma_0 (12))$ of cusp forms of 
weight $2k$ and level $12$.  
We then use this basis together with certain properties of modular forms at their cusps to prove an extension of the Ramanujan-Mordell formula.\\

\noindent
Key words and phrases: Ramanujan-Mordell formula, Dedekind eta function, eta quotients, eta products,  theta functions, Eisenstein series, Eisenstein forms, modular forms, cusp forms, Fourier coefficients, Fourier series.\\

\noindent
2010 Mathematics Subject Classification:  11F11, 11F20,  11F27, 11E20, 11E25, 11F30, 11Y35
\end{abstract}

\section{Introduction}

Let $\nn$, $\nn_0$, $\zz$, $\qq$ and $\cc$ denote the sets of positive integers, non-negative integers, integers, rational numbers and complex numbers, respectively. 
Let $N\in\nn$. Let $\Gamma_0(N)$  be the modular subgroup defined by
\beqars
\Gamma_0(N) = \left\{ \left(
\begin{array}{cc}
a & b \\
c & d
\end{array}
\right)  \mid  a,b,c,d\in \zz ,~ ad-bc = 1,~c \equiv 0 \smod {N}
\right\} .
\eeqars 
Let $k \in \zz$. We write $M_k(\Gamma_0(N))$ to denote the space of modular forms of weight $k$ for  $\Gamma_0(N)$, and 
$E_k (\Gamma_0(N))$ and $S_k(\Gamma_0(N))$ to denote the subspaces of Eisenstein forms and cusp forms 
of  $M_k(\Gamma_0(N))$, respectively. 
It is known  
that
\beqar \label{1_10}
M_k (\Gamma_0(N)) = E_k (\Gamma_0(N)) \oplus S_k(\Gamma_0(N)).
\eeqar
The Dedekind eta function $\eta (z)$ is the holomorphic function defined on the upper half plane $\hh = \{ z \in \cc \mid \mbox{\rm Im}(z) >0 \}$ 
by the product formula
\beqars
\eta (z) = e^{\pi i z/12} \prod_{n=1}^{\infty} (1-e^{2\pi inz}).
\eeqars
An eta quotient is defined to be a finite product of the form
\beqar \label{1_30}
f(z) = \prod_{\delta } \eta^{r_{\delta}} ( \delta z),
\eeqar
where $\delta$ runs through a finite set of positive integers and the exponents $r_{\delta}$ are non-zero integers.
By taking $N$ to be the least common multiple of the $\delta$'s we can write the eta quotient {\rm (\ref{1_30})}  as 
\beqar \label{1_40}
f(z) = \prod_{1\leq \delta \mid N} \eta^{r_{\delta}} ( \delta z) ,
\eeqar 
where some of the exponents $r_{\delta}$ may be $0$.
When all  the  exponents $r_{\delta}$ are nonnegative, $f(z)$ is said to be an eta product.  

As in \cite{Kohler} 
throughout the paper we use the notation $q=e(z):=e^{2\pi i z}$ with $z\in \hh$, and so $|q| < 1$ and $q^{1/24} = e(z/24)$. 
Ramanujan's theta function $\varphi (z)$ is defined by
\beqars
\varphi(z) = \sum_{n=0}^\infty q^{  n^2 }. 
\eeqars
It is known that  $\varphi (z)$ can be expressed as an eta quotient as 
\beqar \label{1_50}
\varphi(z)=\frac{\eta^5(2z)}{\eta^2(z) \eta^2(4z)}.
\eeqar
\noindent For $a_j \in \nn $, $1 \leq j \leq 4k$, we define 
\beqars
N(a_1, \ldots, a_{4k};n):=card\{(x_1,  \ldots , x_{4k}) \in \zz^{4k} \mid  n=a_1 x_1^2  + \cdots  +a_{4k} x_{4k}^2  \} .
\eeqars
Then we have
\beqar \label{1_60}
\varphi(a_1 z)  \cdots \varphi(a_{4k} z)=\sum_{n=0}^\infty N(a_1,   \ldots,  a_{4k};n) q^{ n  }.
\eeqar
The value of  $N(a_1,  \ldots,  a_{4k};n)$ is independent of the order of the $a_j$'s. 

Let $k \geq 2$ be an integer, and let $a_j\in \{1, 3\}$,  $1 \leq j \leq 4k$, with an even number of  $a_j$'s equal to $3$. 
Then we  write 
\beqars
N(a_1,   \ldots, a_{4k};n)=N(1^{4k-2i}, 3^{2i};n), 
\eeqars
where $i$ is an integer with $0\leq i \leq 2k$.
Ramanujan \cite{ramanujan} stated a formula for $N(1^{2k}, 3^0;n)$, which was proved by Mordell in \cite{mordell}, see also \cite{cooper, aaw}. 

In this paper we define a family  $\{ C_{j,k}(z) \}_{1\leq j \leq 4k-5} $  of eta quotients, 
and prove that this family constitute a basis for the space $S_{2k} (\Gamma_0 (12))$ of cusp forms of 
weight $2k$ and level $12$.  
We then use this basis together with certain properties of modular forms at their cusps to prove an extension of the Ramanujan-Mordell formula, 
that is, we give a formula for $N(1^{4k-2i},3^{2i};n)$.

For $n, k \in \nn$ we define  the sum of divisors function $\ds \sigma_{k}(n)$ by
\beqars
\sigma_k(n) = \sum_{1\leq m \mid n} m^{k}.
\eeqars
If $n \not\in \nn$ we set $\sigma_{k }(n)=0$.   
We define the  Eisenstein series $E_{2k}(z)$ by
\beqar \label{1_80}
&&\ds E_{2k} (z)  :=-\frac{B_{2k}}{4k}+\sum_{n=1}^{\infty} \sigma_{2k-1}(n)q^{ n },
\eeqar
where $B_{2k}$ are Bernoulli numbers defined by the generating function
\beqar \label{1_90}
\ds \frac{x}{e^x-1}=\sum_{n=0}^\infty\frac{B_n x^n}{n!}.
\eeqar

The cusps of $\Gamma_0(N)$ can be represented by  rational numbers $a/c$, where $a\in \zz$, $c\in \nn$,  
$c|N$ and $\gcd(a,c)=1$, see \cite[p. 320]{okamoto} and \cite[p. 103]{DiamondShurman}. 
We can choose the representatives of cusps of $\Gamma_0(12)$  as 
\beqars
1, 1/2, 1/3, 1/4, 1/6, \infty. 
\eeqars
Throughout the paper we use $\infty$ and $1/12$ interchangeably as they are equivalent cusps for $\Gamma_0(12)$. 

Let $f(z)$ be an eta quotient given by (\ref{1_40}). 
A formula for the order $ \ds v_{a/c}(f) $ of  $f(z)$   at the cusp $a/c$ (see \cite[p. 320]{okamoto} and \cite[Proposition 3.2.8]{Ligozat})
is given by
\beqar \label{1_110}
v_{a/c}(f)=\frac{N}{24\gcd(c^2,N)}\sum_{1 \leq \delta|N}\frac{\gcd(\delta,c)^2 \cdot r_\delta }{\delta} .
\eeqar

We use the following theorem to determine if a given eta quotient is in $M_k(\Gamma_0(N))$.  
See  \cite[Proposition 1, p. 284]{Lovejoy}, \cite[Corollary 2.3, p. 37]{Kohler}, 
\cite[p. 174]{GordonSinor}, \cite{Ligozat} and \cite{Kilford}. 

\begin{theorem}{\bf (Ligozat)}
Let $f(z)$ be an eta quotient given by  (\ref{1_40})  which satisfies the following conditions:\\

{\em (L1)~} $\ds \sum_{ 1\leq  \delta \mid N} \delta \cdot r_{\delta} \equiv 0 \smod {24}$,\\

{\em (L2)~} $\ds \sum_{ 1 \leq \delta \mid N} \frac{N}{\delta} \cdot r_{\delta} \equiv 0 \smod {24}$, \\

{\em (L3)~} For each $d \mid N$, $\ds \sum_{1 \leq \delta \mid N} \frac{ \gcd (d, \delta)^2 \cdot r_{\delta} }{\delta} \geq 0 $,\\

{\em (L4)~} $\ds \sqrt{\prod_{1 \leq \delta \mid N} \delta^{ r_{\delta} } }\in \qq $, \\

{\em (L5)~} $\ds k = \frac{1}{2} \sum_{1 \leq \delta \mid N} r_{\delta} $ an even integer. \\

\noindent
Then $f(z) \in M_k(\Gamma_0(N))$.
Furthermore if all inequalities in {\em (L3)} are strict then  $f(z) \in S_k(\Gamma_0(N))$.
\end{theorem}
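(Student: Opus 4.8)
The plan is to deduce the statement from the transformation behaviour of the Dedekind eta function under $SL_2(\zz)$, treating the transformation law of $f$ and its behaviour at the cusps separately. \textbf{Transformation law.} I would start from the classical identities $\eta(z+1)=e^{\pi i/12}\eta(z)$ and $\eta(-1/z)=\sqrt{-iz}\,\eta(z)$, which for any $\gamma=\bigl(\begin{smallmatrix}a&b\\c&d\end{smallmatrix}\bigr)\in SL_2(\zz)$ with $c>0$ yield $\eta(\gamma z)=\varepsilon(a,b,c,d)(cz+d)^{1/2}\eta(z)$, where $\varepsilon(a,b,c,d)$ is an explicit $24$th root of unity expressed through the Dedekind sum $s(d,c)$. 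Now fix $\gamma=\bigl(\begin{smallmatrix}a&b\\c&d\end{smallmatrix}\bigr)\in\Gamma_0(N)$ and $\delta\mid N$. Since $\delta\mid N\mid c$, the matrix $\gamma_\delta:=\bigl(\begin{smallmatrix}a&b\delta\\c/\delta&d\end{smallmatrix}\bigr)$ lies in $SL_2(\zz)$ and satisfies $\delta\cdot(\gamma z)=\gamma_\delta\cdot(\delta z)$; substituting into the $\eta$ transformation gives $\eta(\delta\gamma z)=\varepsilon(\gamma_\delta)(cz+d)^{1/2}\eta(\delta z)$. Raising to the power $r_\delta$ and multiplying over all $\delta\mid N$ then produces
\[
f(\gamma z)=\chi(\gamma)\,(cz+d)^{k}\,f(z),\qquad \chi(\gamma):=\prod_{1\leq\delta\mid N}\varepsilon(\gamma_\delta)^{r_\delta},
\]
where $k=\half\sum_{\delta}r_\delta$ is an even integer by (L5); in particular $(cz+d)^{k}$ is single-valued and $\gamma\mapsto\chi(\gamma)$ is a character of $\Gamma_0(N)$ (a homomorphism, since $f\not\equiv 0$ and the weight is integral).

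\textbf{Triviality of the character.} The heart of the proof is to show $\chi\equiv 1$. Since $\chi$ is a character, it suffices to evaluate it on a finite generating set of $\Gamma_0(N)$. Expanding $\varepsilon(\gamma_\delta)$ and separating contributions, the phase coming from the $e^{\pi i/12}$ factors is controlled by $\sum_\delta\delta\,r_\delta$ at the cusp $\infty$ and by $\sum_\delta(N/\delta)\,r_\delta$ at the cusp $0$, so (L1) and (L2) make these phases trivial; the residual Dedekind-sum contribution, after invoking Dedekind reciprocity, collapses to the Kronecker symbol $d\mapsto\left(\dfrac{(-1)^{k}\prod_{\delta}\delta^{r_\delta}}{d}\right)$, which is identically $1$ because $k$ is even by (L5) and $\prod_{\delta}\delta^{r_\delta}$ is a rational square by (L4). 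This Dedekind-sum bookkeeping — in essence the classical computations of Newman and of Ligozat — is the step I expect to be the main obstacle; everything else is formal.

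\textbf{Behaviour at the cusps and conclusion.} Because $\eta$ is holomorphic and nowhere zero on $\hh$, each factor $\eta^{r_\delta}(\delta z)$, and hence $f$, is holomorphic on $\hh$. At a cusp $a/c$ of $\Gamma_0(N)$ with $c\mid N$, the order of $f$ is given by formula (\ref{1_110}), and that value depends on $c$ only through $d:=\gcd(c,N)\mid N$; hence the inequalities (L3), as $d$ ranges over the divisors of $N$, assert exactly that $v_{a/c}(f)\geq 0$ at every cusp. Combining the transformation law (weight $k$, trivial character), holomorphy on $\hh$, and $v_{a/c}(f)\geq 0$ at all cusps, we get $f\in M_k(\Gamma_0(N))$. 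If in addition every inequality in (L3) is strict, then $v_{a/c}(f)>0$ at each cusp, so $f$ vanishes at all cusps and $f\in S_k(\Gamma_0(N))$, as claimed.
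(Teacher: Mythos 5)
First, note that the paper does not prove this statement at all: Theorem 1.1 is Ligozat's classical criterion, quoted with references (Ligozat, K\"ohler, Gordon--Sinor, Lovejoy, Ono), so there is no ``paper proof'' to compare against; your proposal has to stand on its own as a proof of the classical result. Its architecture is the right one, and indeed the one followed in the cited sources: holomorphy on $\hh$ is immediate since $\eta$ is nonvanishing, the order formula (\ref{1_110}) depends on the cusp $a/c$ only through $\gcd(c,N)$, so (L3) (resp.\ strict (L3)) is exactly nonnegativity (resp.\ positivity) of the orders at all cusps, and the remaining issue is that $f(\gamma z)=\chi(\gamma)(cz+d)^k f(z)$ on $\Gamma_0(N)$ with $\chi$ trivial. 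Your use of (L4) and (L5) to kill the residual Kronecker symbol $\bigl(\frac{(-1)^k\prod_\delta \delta^{r_\delta}}{d}\bigr)$ is also the correct endgame.

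The genuine gap is that the crux of the theorem --- the identification of the multiplier $\chi(\gamma)=\prod_\delta \varepsilon(\gamma_\delta)^{r_\delta}$ with that Kronecker symbol under (L1) and (L2) --- is not proved but only invoked (``the classical computations of Newman and of Ligozat''), and you yourself flag it as the main obstacle. As written, the reduction is moreover described only heuristically: the assertion that the $e^{\pi i/12}$-phases are ``controlled by $\sum_\delta \delta r_\delta$ at the cusp $\infty$ and by $\sum_\delta (N/\delta) r_\delta$ at the cusp $0$'' is not an argument, since in the actual computation (L1) and (L2) enter through the Dedekind-sum expression for $\varepsilon(\gamma_\delta)$ evaluated at an arbitrary $\gamma\in\Gamma_0(N)$, together with Dedekind reciprocity and quadratic reciprocity; nothing in your sketch shows that the total $24$th-root-of-unity ambiguity collapses modulo exactly the congruences (L1)--(L2). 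The remark that it ``suffices to evaluate $\chi$ on a finite generating set of $\Gamma_0(N)$'' is likewise not used and is not how the classical proof proceeds (explicit generators of $\Gamma_0(N)$ for general $N$ are unwieldy; the standard proofs compute $\varepsilon(\gamma_\delta)$ directly for every $\gamma$). There are also small unhandled cases ($c=0$ or $c<0$, i.e.\ $\pm T^n$ and the sign conventions in $(cz+d)^{1/2}$) that the Dedekind-sum formalism is precisely designed to absorb. So the outline is faithful to the standard route, but the step that makes the theorem true is missing; to complete it you would either carry out the Newman--Ligozat multiplier computation or cite it as a black box, in which case you are proving no more than the paper already does by citation.
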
 
\noindent For $N=12$ in (L4) of Theorem 1.1, we have
\beqar \label{1_130}
\sqrt{\prod_{1 \leq \delta \mid 12} \delta^{ r_{\delta} } }=2^{r_4+r_{12}} \sqrt{2^{ r_{2} +r_6} 3^{ r_{3}+r_6+r_{12} }  }.
\eeqar
Thus the expresion in (\ref{1_130}) is a rational number if and only if 
\beqars
 r_{2} +r_6  \equiv  0 \smod{2} \mbox{ and }
 r_{3} +r_6+r_{12}  \equiv 0 \smod{2}.
\eeqars

\section{Statements of main results}

For $j, k \in \zz$ we define  an eta quotient $C_{j,k}(z)$  by 
\beqar
C_{j,k}(z)&:=&  \Big(\frac{\eta^{10}(2z) \eta^{5}(3z) \eta(4z) \eta^{2}(6z) }{\eta^{15}(z) \eta^{3}(12z)} \Big)
\Big(\frac{\eta^{2}(2z)  \eta(3z) \eta^{3}({12z})}{ \eta^{3}(z) \eta(4z) \eta^{2}(6z) }\Big)^j  
\Big(\frac{\eta^{6}(z)\eta(6z) }{\eta^{3}(2z) \eta^{2}(3z)}\Big)^{k} \nonumber \\ \label{2_10}
&=& q^j+ \sum_{n=j+1}^\infty c_{j,k}(n) q^{  n }.
\eeqar

In the following theorem we give a basis for $M_{2k} (\Gamma_0 (12))$ when $k \geq 2$.

\begin{theorem} 
Let $k\geq 2$ be an integer. 

{\em \bf (a)} 
The family $\{ C_{j,2k}(z) \}_{1\leq j \leq 4k-5} $ constitute a basis for $S_{2k} (\Gamma_0 (12))$.

{\em \bf (b)} The set of Eisenstein series
\beqars
&& \{  E_{2k} (z),~E_{2k} (2z), ~E_{2k} (3z), ~E_{2k} (4z), ~E_{2k} (6z), ~E_{2k} ({12z}) \}
\eeqars
constitute a basis for $E_{2k}(\Gamma_0(12))$. 

{\em \bf (c)} The set 
\beqars
\{ E_{2k} (\delta z) \mid  \delta =1, 2, 3, 4, 6, 12 \} \cup \{ C_{j,2k} (z) \mid 1 \leq j \leq 4k-5 \} 
\eeqars
constitute a basis for  $M_{2k}(\Gamma_0(12))$. 
\end{theorem}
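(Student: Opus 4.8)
The plan is to prove parts (a), (b), (c) in that order, with (a) being the crux. For part (a), I would first verify that each $C_{j,2k}(z)$ with $1 \le j \le 4k-5$ is a genuine cusp form in $S_{2k}(\Gamma_0(12))$ by checking conditions (L1)--(L5) of Ligozat's Theorem 1.1. Collecting the exponents $r_\delta$ from the three factors in (\ref{2_10}), one gets $r_1 = -15-3j+6k$, $r_2 = 10+2j-3k$, $r_3 = 5+j-2k$, $r_4 = 1-j$, $r_6 = 2-j+k$, $r_{12} = -3+3j$, and a direct computation shows $\sum \delta r_\delta = 24$ and $\sum (12/\delta) r_\delta = 0$, both $\equiv 0 \pmod{24}$, giving (L1), (L2); the rationality conditions $r_2+r_6 = 12+j-2k$ and $r_3+r_6+r_{12} = 4+3j-k$ forcing (L4) need the additional hypothesis on parity — here I would note that $C_{j,2k}$ rather than a general $C_{j,k}$ is used precisely so that (L4) and (L5) hold, since $\sum r_\delta / 2 = 2k$ is automatically an even integer, giving (L5). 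For (L3) I would use the order formula (\ref{1_110}) to compute $v_{a/c}(C_{j,2k})$ at each of the six cusps $1, 1/2, 1/3, 1/4, 1/6, \infty$ and check strict positivity for every $j$ in the stated range — at the cusp $\infty$ the order is $j \ge 1 > 0$ by (\ref{2_10}), and the other five cusps must be handled by the explicit linear-in-$j$ formulas, which is where the constraint $1 \le j \le 4k-5$ comes from.

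Next, still within (a), I would establish linear independence: since $C_{j,2k}(z) = q^j + O(q^{j+1})$, the leading terms $q, q^2, \ldots, q^{4k-5}$ are distinct, so the $4k-5$ eta quotients are linearly independent over $\cc$ (a triangular system with respect to the $q$-expansion). Then I would invoke the dimension formula for $S_{2k}(\Gamma_0(12))$: using the standard genus/elliptic-points/cusps data for $\Gamma_0(12)$ (genus $0$, no elliptic points, $6$ cusps, index $24$), one gets $\dim S_{2k}(\Gamma_0(12)) = (2k-1) \cdot 0 - 1 + \ldots$; carrying out Riemann--Roch / the Cohen--Oesterl\'e formula yields $\dim S_{2k}(\Gamma_0(12)) = 4k-5$ for $k \ge 2$. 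Since we have $4k-5$ linearly independent cusp forms inside a space of dimension $4k-5$, they form a basis.

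For part (b), each $E_{2k}(\delta z)$ with $\delta \mid 12$ lies in $M_{2k}(\Gamma_0(12))$ (standard, since $E_{2k} \in M_{2k}(\mathrm{SL}_2(\zz))$ and the $V_\delta$ operator sends level $1$ to level $\delta \mid 12$); the six forms are linearly independent because their constant-term behavior at the six inequivalent cusps is governed by an invertible matrix (equivalently, one compares $q$-expansions: the $q$-orders of $E_{2k}(\delta z) - $ constant are supported on multiples of $\delta$, giving a triangular-type argument). Since $\dim E_{2k}(\Gamma_0(12)) = 6$ (the number of cusps, for weight $\ge 4$), these six span. Part (c) is then immediate from (\ref{1_10}): $M_{2k}(\Gamma_0(12)) = E_{2k}(\Gamma_0(12)) \oplus S_{2k}(\Gamma_0(12))$, so the union of the two bases from (a) and (b) is a basis of the direct sum.

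The main obstacle is the verification of (L3) — the strict positivity of $v_{a/c}(C_{j,2k})$ at all six cusps, uniformly for $1 \le j \le 4k-5$. This requires assembling the six explicit affine functions of $j$ (and $k$) from (\ref{1_110}) and checking each is strictly positive on the integer interval $[1, 4k-5]$; I expect the binding constraints at the two endpoints $j=1$ and $j=4k-5$ to be exactly what pins down the range, and some cusps (likely $1/3$ and $1/4$) to give the tightest inequalities. Everything else — the dimension count, the triangularity argument for independence, and the assembly in (c) — is routine once (L3) is in hand.
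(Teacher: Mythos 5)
Your proposal is correct and follows essentially the same route as the paper, which simply invokes Ligozat's criterion for membership in $S_{2k}(\Gamma_0(12))$, linear independence from the leading terms $q^j$ in (\ref{2_10}), the dimension formulas from Stein (giving $\dim S_{2k}(\Gamma_0(12))=4k-5$ and the six-dimensional Eisenstein space), and the decomposition (\ref{1_10}), without writing out the cusp computations you sketch. Two small arithmetic corrections to your sketch: the exponent is $r_6=2-2j+k$ (not $2-j+k$), so $r_2+r_6=12-2k$ and $r_3+r_6+r_{12}=4+2j-k$, which is why (L4)--(L5) hold precisely for even weight parameter $2k$; and the binding constraint in (L3) is the cusp $1$, where $v_{1}(C_{j,2k})=4k-4-j$ forces $j\le 4k-5$, while the cusps $1/2$, $1/3$, $1/4$, $1/6$ all have order $1$ independent of $j$ and $k$.
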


For convenience we set 
\beqar \label{2_20}
\alpha_k=\frac{-4k}{(2^{2k}-1)(3^{2k}-1)B_{2k}},
\eeqar
where $B_{2k}$ are Bernoulli numbers given in (\ref{1_90}). 
Also we write $[j]f(z):= a_j$ for $\ds f(z)=\sum_{n=0}^\infty a_n q^{n}$. 
We now give an extension of the Ramanujan-Mordell Theorem.

\begin{theorem}   
Let $k\geq 2$ be an integer and $i$ an integer satisfying $0 \leq i \leq 2k$. Let $\alpha_k$ be as in {\em(\ref{2_20})}. Then 
\begin{eqnarray*}
 \varphi^{4k-2i}(z)\varphi^{2i}(3z)= \sum_{r \mid 12} b_{(r,i,k)} E_{2k}(rz)  + \sum_{1\leq j \leq 4k-5}a_{(j,i,k)} C_{j,2k}(z) , 
\end{eqnarray*}
where
\begin{eqnarray} \label{2_30}
&& b_{(1,i,k)}=(-1)^{k}(3^{2k-i}+(-1)^{i+1}) \cdot \alpha_k,\\ \label{2_40}
&& b_{(2,i,k)}=(-1)^{i+1}(1+(-1)^{i+k})(3^{2k-i}+(-1)^{i+1})\cdot \alpha_k,\\ \label{2_50}
&& b_{(3,i,k)}=(-1)^{i+k}3^{2k-i}(3^{i}+(-1)^{i+1}) \cdot \alpha_k,\\ \label{2_60}
&& b_{(4,i,k)}=(-1)^{i}2^{2k}(3^{2k-i}+(-1)^{i+1}) \cdot \alpha_k,\\ \label{2_70}
&& b_{(6,i,k)}=-(1+(-1)^{i+k})3^{2k-i}(3^{i}+(-1)^{i+1})\cdot \alpha_k,\\ \label{2_80}
&& b_{(12,i,k)}=2^{2k}3^{2k-i}(3^{i}+(-1)^{i+1})\cdot \alpha_k,
\end{eqnarray}
and for $1 \leq j \leq 4k-5$ 
\beqar \label{2_90}
&&a_{(j,i,k)}= N(1^{4k-2i},3^{2i};j)  -\sum_{r \mid 12} b_{(r,i,k)} \sigma_{2k-1}(j/r) - \sum_{1\leq l \leq j-1}a_{(l,i,k)} c_{l,2k}(j).
\eeqar
\end{theorem}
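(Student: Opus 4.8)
The plan is to express $\varphi^{4k-2i}(z)\varphi^{2i}(3z)$ as an element of $M_{2k}(\Gamma_0(12))$, then decompose it along the basis given in Theorem 2.2(c). The first step is to verify membership: using the eta-quotient representation $\varphi(z)=\eta^5(2z)/(\eta^2(z)\eta^2(4z))$ from $(1.50)$, the product $\varphi^{4k-2i}(z)\varphi^{2i}(3z)$ is itself an eta quotient on divisors of $12$, and one checks conditions (L1)--(L5) of the Ligozat theorem (Theorem 1.1) — or more simply invokes the standard fact that a product of $4k$ theta functions $\varphi(a_jz)$ with $a_j\mid 3$ lies in $M_{2k}(\Gamma_0(12))$, since $\varphi(z)\in M_{1/2}(\Gamma_0(4))$ and scaling by $3$ pushes the level to $12$. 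Given membership, Theorem 2.2(c) guarantees unique constants $b_{(r,i,k)}$ and $a_{(j,i,k)}$ with
\begin{eqnarray*}
\varphi^{4k-2i}(z)\varphi^{2i}(3z)=\sum_{r\mid 12}b_{(r,i,k)}E_{2k}(rz)+\sum_{1\leq j\leq 4k-5}a_{(j,i,k)}C_{j,2k}(z).
\end{eqnarray*}

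**Next I would pin down the Eisenstein coefficients $b_{(r,i,k)}$.** Since the cusp forms $C_{j,2k}(z)$ all vanish at every cusp, the Eisenstein part is the image of $\varphi^{4k-2i}(z)\varphi^{2i}(3z)$ under the projection $M_{2k}(\Gamma_0(12))\to E_{2k}(\Gamma_0(12))$, and this projection is determined by the values (leading Fourier-type terms) of the form at the six cusps $1,1/2,1/3,1/4,1/6,\infty$. So the key computation is: evaluate the constant terms of $\varphi^{4k-2i}(z)\varphi^{2i}(3z)$ in the local expansions at each cusp, evaluate the corresponding constant terms of each $E_{2k}(\delta z)$ at each cusp, and solve the resulting $6\times 6$ linear system for the $b_{(r,i,k)}$. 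The values of $E_{2k}(\delta z)$ at the cusps of $\Gamma_0(12)$ are classical (they follow from the transformation of $E_{2k}$ under $SL_2(\zz)$ together with the scaling $z\mapsto\delta z$), and the values of the theta product at the cusps follow from the transformation law of $\varphi$ (equivalently of $\eta$) — these are exactly ``the properties of modular forms at their cusps'' flagged in the abstract and in the formula $(1.110)$ for eta-quotient orders. Carrying out this linear algebra and simplifying with the substitution $\alpha_k=-4k/((2^{2k}-1)(3^{2k}-1)B_{2k})$ from $(2.20)$ should produce the closed forms $(2.30)$--$(2.80)$; the appearance of the factors $3^{2k-i}+(-1)^{i+1}$, $3^i+(-1)^{i+1}$ and the parity factors $1+(-1)^{i+k}$ is consistent with how the $i$ copies of $\varphi(3z)$ and the parity of $i$ interact with the Atkin--Lehner-type symmetries at the cusps $1/3,1/6$.

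**Then the cusp-form coefficients $a_{(j,i,k)}$ follow by comparing $q$-expansions recursively.** Once the $b_{(r,i,k)}$ are known, subtract $\sum_{r\mid 12}b_{(r,i,k)}E_{2k}(rz)$ from $\varphi^{4k-2i}(z)\varphi^{2i}(3z)$; the result is a cusp form, hence a combination $\sum_j a_{(j,i,k)}C_{j,2k}(z)$. Because $C_{j,2k}(z)=q^j+\sum_{n>j}c_{j,2k}(n)q^n$ is in ``triangular'' form with leading term $q^j$, the coefficients $a_{(j,i,k)}$ are determined one at a time: matching the coefficient of $q^j$ on both sides gives
\begin{eqnarray*}
a_{(j,i,k)}=[j]\big(\varphi^{4k-2i}(z)\varphi^{2i}(3z)\big)-\sum_{r\mid 12}b_{(r,i,k)}[j]E_{2k}(rz)-\sum_{1\leq l\leq j-1}a_{(l,i,k)}c_{l,2k}(j),
\end{eqnarray*}
and since $[j]\varphi^{4k-2i}(z)\varphi^{2i}(3z)=N(1^{4k-2i},3^{2i};j)$ by $(1.60)$ and $[j]E_{2k}(rz)=\sigma_{2k-1}(j/r)$ by $(1.80)$, this is exactly formula $(2.90)$. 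The induction on $j$ is immediate from the triangularity, so no extra argument is needed there.

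**The main obstacle** is the explicit evaluation of $\varphi^{4k-2i}(z)\varphi^{2i}(3z)$ and of the $E_{2k}(\delta z)$ at the cusps $1,1/2,1/3,1/4,1/6$ (the cusp $\infty$ being trivial), and then solving and simplifying the $6\times 6$ system cleanly enough to recognize the closed forms $(2.30)$--$(2.80)$; the bookkeeping with the $\gcd(c^2,N)$ normalizations in $(1.110)$, the width of each cusp, and the multiplier systems of $\eta$ is where errors creep in. A secondary (but purely mechanical) point is checking that $\varphi^{4k-2i}(z)\varphi^{2i}(3z)$ genuinely lies in $M_{2k}(\Gamma_0(12))$ rather than a form with character or higher level, which is what licenses the use of the basis from Theorem 2.2; this is handled by the Ligozat conditions as noted above.
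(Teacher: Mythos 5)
Your proposal is correct and follows essentially the same route as the paper: verify $\varphi^{4k-2i}(z)\varphi^{2i}(3z)\in M_{2k}(\Gamma_0(12))$ via its eta-quotient form and Ligozat's criterion, determine the $b_{(r,i,k)}$ by equating the first (constant) terms of the local expansions at the six cusps of $\Gamma_0(12)$ and solving the resulting $6\times 6$ linear system, and then obtain the $a_{(j,i,k)}$ recursively from the triangular $q$-expansions $C_{j,2k}(z)=q^j+\cdots$. The only difference is one of detail, not of method: the paper explicitly computes the cusp expansions of $\eta(rz)$, $\varphi(z)$, $\varphi(3z)$ and $E_{2k}(tz)$ (Section 4 and Theorem 4.1) before setting up the same linear system, whereas you leave that evaluation as the acknowledged remaining computation.
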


The following theorem follows immediately from Theorem 2.2. 

\begin{theorem} 
Let $k\geq 2$ be an integer and $i$ an integer with $0 \leq i \leq 2k$. Then 
\begin{eqnarray*}
N(1^{4k-2i}, 3^{2i};n)  = \sum_{r \mid 12} b_{(r,i,k)} \sigma_{2k-1}(n/r)  + \sum_{1\leq j \leq 4k-5} a_{(j,i,k)} c_{j,2k}(n) , 
\end{eqnarray*}
where $a_{(j,i,k)}$, $c_{j,2k}(n)$  and $b_{(r,i,k)}~(r=1,2,3,4,6,12)$  are given in {\em (\ref{2_90}), (\ref{2_10})}, and  {\em (\ref{2_30})--(\ref{2_80})} respectively. 
\end{theorem}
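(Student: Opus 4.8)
The plan is to derive Theorem~2.3 as an immediate consequence of Theorem~2.2 by comparing Fourier coefficients. Since $\varphi(z)$ and $\varphi(3z)$ are modular forms of weight $1/2$ on a suitable group, the product $\varphi^{4k-2i}(z)\varphi^{2i}(3z)$ is a modular form of weight $2k$ for $\Gamma_0(12)$; this is precisely what makes Theorem~2.2 applicable, and that point will already have been established in the proof of Theorem~2.2. So here nothing new about modularity is needed.

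First I would record, from \eqref{1_60}, the $q$-expansion
\begin{eqnarray*}
\varphi^{4k-2i}(z)\varphi^{2i}(3z) = \sum_{n=0}^\infty N(1^{4k-2i},3^{2i};n)\, q^{n}.
\end{eqnarray*}
Next I would take the identity of Theorem~2.2 and extract the coefficient of $q^n$ on the right-hand side. For the Eisenstein part, $[n]\,E_{2k}(rz)$ equals $\sigma_{2k-1}(n/r)$ for $n\geq 1$ by the definition \eqref{1_80} (with the convention $\sigma_{2k-1}(n/r)=0$ when $r\nmid n$), contributing $\sum_{r\mid 12} b_{(r,i,k)}\,\sigma_{2k-1}(n/r)$. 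For the cusp-form part, $[n]\,C_{j,2k}(z) = c_{j,2k}(n)$ by the defining expansion \eqref{2_10} (note $c_{j,2k}(j)=1$ and $c_{j,2k}(n)=0$ for $n<j$, but that refinement is not needed for the statement). Adding these gives
\begin{eqnarray*}
N(1^{4k-2i},3^{2i};n) = \sum_{r\mid 12} b_{(r,i,k)}\,\sigma_{2k-1}(n/r) + \sum_{1\leq j\leq 4k-5} a_{(j,i,k)}\, c_{j,2k}(n),
\end{eqnarray*}
which is exactly the assertion of Theorem~2.3, with the coefficients $b_{(r,i,k)}$ and $a_{(j,i,k)}$ being literally those supplied by Theorem~2.2 via \eqref{2_30}--\eqref{2_80} and \eqref{2_90}.

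The only genuine content of this proof is the legitimacy of term-by-term coefficient comparison, which is justified because all series involved are $q$-expansions of holomorphic modular forms (hence absolutely convergent on $|q|<1$) and Theorem~2.2 asserts an equality in $M_{2k}(\Gamma_0(12))$, a space in which a form is determined by its Fourier coefficients. There is no real obstacle: the substantive work — establishing that $\{E_{2k}(\delta z)\}\cup\{C_{j,2k}(z)\}$ is a basis (Theorem~2.1) and pinning down the explicit $b_{(r,i,k)}$ by evaluating both sides at the cusps of $\Gamma_0(12)$ — is entirely contained in Theorems~2.1 and~2.2. I would close by remarking that one may optionally separate the $n=0$ case (where $N(1^{4k-2i},3^{2i};0)=1$ matches $\sum_r b_{(r,i,k)}\cdot(-B_{2k}/4k)$ since the $C_{j,2k}$ have no constant term), though this is subsumed in the coefficient comparison and need not be spelled out.
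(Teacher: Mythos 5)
Your proposal is correct and follows essentially the same route as the paper: the paper proves Theorem~2.3 by equating Fourier coefficients of $q^n$ in the identity of Theorem~2.2, using (\ref{1_60}), (\ref{1_80}) and (\ref{2_10}), exactly as you do. Your write-up merely spells out the term-by-term comparison and the convention $\sigma_{2k-1}(n/r)=0$ when $r\nmid n$, which the paper leaves implicit.
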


\section{Proof of Theorem 2.1}

{\bf (a)} 
By Theorem 1.1 we see that $C_{j,2k}(z)\in S_{2k}(\Gamma_0(12))$ for $1\leq j \leq 4k-5$. 
It follows from (\ref{2_10})  that $\{ C_{j,2k}(z)\}_{1\leq j \leq 4k-5}$ is a linearly independent set. 
We deduce from the formulae in \cite[Section 6.3, p. 98]{stein}  that
\beqars
&& \ds \dim(S_{2k}(\Gamma_0(12)))=4k-5.
\eeqars
Thus the set  $\{ C_{j,2k}(z)\}_{1\leq j \leq 4k-5}$ is a basis for $S_{2k}(\Gamma_0(12))$.

{\bf (b)} It follows from  \cite[Theorem 5.9]{stein} that 
\beqars
&& \{  E_{2k} (z),~E_{2k} (2z), ~E_{2k} (3z), ~E_{2k} (4z), ~E_{2k} (6z), ~E_{2k} ({12z}) \}
\eeqars
constitute a basis for $E_{2k}(\Gamma_0(12))$. 

{\bf (c)} 
Appealing to (\ref{1_10}), the assertion follows from (a) and (b).

\section{Fourier series expansions of   $\eta (rz)$ and $E_{2k}(rz)$  at certain cusps}

Let $k \geq 2$ be an integer. 
For convenience we set  $\eta_r (z)=\eta(rz)$  for $r \in \nn$. 
We also set 
\beqar \label{4_10}
A_c= \begin{bmatrix} -1       & 0  \\ 
c       & -1 \end{bmatrix} \in SL_2(\zz).
\eeqar
The Fourier series expansions of $\eta_r(z)$ for $r=1,2,3,4,6,12$ at the cusp $1/c$ 
are given by the Fourier series expansions of $\eta_r(A_c^{-1}z)$ at the cusp $\infty$. 

In  \cite[Theorem 1.7 and Proposition 2.1]{Kohler} we take 
 $L = \begin{bmatrix}
    x       & y  \\
    u       & v 
\end{bmatrix} =L_r$  as  
\beqars
 &&L_1=\begin{bmatrix}
    -1       & 1  \\
    -1       & 0 
\end{bmatrix}, 
 L_2=\begin{bmatrix}
    -2       & 1  \\
    -1       & 0 
\end{bmatrix},    
L_3=\begin{bmatrix}
    -3       & 1  \\
    -1       & 0 
\end{bmatrix},\\
&&
L_4=\begin{bmatrix}
    -4       & 1  \\
    -1       & 0 
\end{bmatrix},
L_6=\begin{bmatrix}
    -6       & 1  \\
    -1       & 0 
\end{bmatrix}, 
 L_{12}=\begin{bmatrix}
    -12       & 1  \\
    -1       & 0
\end{bmatrix}, 
\eeqars
and 
 $A = \begin{bmatrix}
    a       & b  \\
    c       & d 
\end{bmatrix} = A_1$, where  $A_1$ is given by (\ref{4_10}).  
We obtain the Fourier series expansions of $\eta_r(z)$ for $r=1,2,3,4,6,12$ at the cusp $1$  as 
\beqar \label{4_20}
&& \ds \eta_1(A_1^{-1}z)=e^{ \pi i /3} (-z-1)^{1/2} \, \sum_{n \geq 1} \dqu{12}{n} e\Big( \frac{n^2}{24} (z+1) \Big), \\
&& \ds \eta_2(A_1^{-1}z)=\frac{e^{5 \pi i /12}}{2^{1/2}}(-z-1)^{1/2} \, \sum_{n\geq 1}\dqu{12}{n} e\Big( \frac{n^2}{48} (z+1) \Big), \\
&& \ds \eta_3(A_1^{-1}z)=\frac{e^{ \pi i /2}}{3^{1/2}}(-z-1)^{1/2} \, \sum_{n\geq 1}\dqu{12}{n} e\Big( \frac{n^2}{72} (z+1) \Big), \\
&& \ds \eta_4(A_1^{-1}z)=\frac{e^{7 \pi i /12}}{2}(-z-1)^{1/2} \, \sum_{n\geq 1}\dqu{12}{n} e\Big( \frac{n^2}{96} (z+1) \Big), \\
&& \ds \eta_6(A_1^{-1}z)=\frac{e^{3 \pi i /4}}{6^{1/2}}(-z-1)^{1/2} \, \sum_{n\geq 1}\dqu{12}{n} e\Big( \frac{n^2}{144} (z+1) \Big), \\ \label{4_70}
&& \ds \eta_{12}(A_1^{-1}z)=\frac{e^{5 \pi i /4}}{12^{1/2}}(-z-1)^{1/2} \, \sum_{n\geq 1}\dqu{12}{n} e\Big( \frac{n^2}{288} (z+1) \Big).
\eeqar
From (\ref{1_50}) and (\ref{4_20})--(\ref{4_70}) we obtain the Fourier series expansions of $\varphi (z)$ 
and $\varphi (3z)$ at  the cusp $1$ as
\beqar 
& \ds \varphi(A_1^{-1} z)&\ds =  \frac{\eta_2^5 (A_1^{-1} z)}{\eta_1^2(A_1^{-1} z)\eta_4^2(A_1^{-1}z)} \nonumber \\ \label{4_80}
&& \ds =  \frac{\ds \frac{ e^{\pi i /4}}{2^{1/2}} (-z-1)^{1/2} 
\ds \Big(\sum_{n\geq 1}\dqu{12}{n} e\big( \frac{n^2}{48} (z+1) \big) \Big )^5}
{ \ds \Big(\sum_{n\geq 1}\dqu{12}{n} e\big( \frac{n^2}{24} (z+1)\big) \Big)^2 
\ds\Big( \sum_{n\geq 1}\dqu{12}{n} e\big( \frac{n^2}{96} (z+1)\big) \Big)^2}, \\
& \ds \varphi(3 A_1^{-1}  z)&\ds =  \frac{\eta_6^5 (A_1^{-1} z)}{\eta_3^2(A_1^{-1} z)\eta_{12}^2(A_1^{-1}z)} \nonumber \\  \label{4_90}
&& =\ds\frac{  \ds \frac{e^{\pi i /4}}{6^{1/2}}  (-z-1)^{1/2}
\Big( \ds \sum_{n\geq 1}\dqu{12}{n} e\big( \frac{n^2}{144} (z+1) \big) \Big)^5}
{\Big( \ds \sum_{n\geq 1}\dqu{12}{n} e\big( \frac{n^2}{72} (z+1) \big) \Big)^2 
\Big( \ds \sum_{n\geq 1}\dqu{12}{n} e\big( \frac{n^2}{288} (z+1)\big) \Big)^2} .
\eeqar
Similarly, by taking $A=A_3$ in  \cite[Theorem 1.7 and Proposition 2.1]{Kohler}
and $L$ as  
\beqars
&&L_1:=\begin{bmatrix}
    -1       & -5  \\
    -3       & -16 
\end{bmatrix},~  
L_2:=\begin{bmatrix}
    -2       & -5  \\
    -3       & -8 
\end{bmatrix},~  
L_3:=\begin{bmatrix}
    -1       & 1  \\
    -1       & 0 
\end{bmatrix},\\
&&L_4:=\begin{bmatrix}
    -4       & -5  \\
    -3       & -4
\end{bmatrix},~
L_6:=\begin{bmatrix}
    -2       & 1  \\
    -1       & 0 
\end{bmatrix},~  
L_{12}:=\begin{bmatrix}
    -4       & 1  \\
    -1       & 0 
\end{bmatrix} 
\eeqars
we obtain the Fourier series expansions of $\eta_r(z)$ for $r=1,2,3,4,6,12$ at the cusp $1/3$ as 
\beqar \label{4_100}
&& \ds \eta_1(A_3^{-1}z)=e^{5 \pi i /3}(-3z-1)^{1/2} \, \sum_{n\geq 1}\dqu{12}{n} e\Big(\frac{n^2}{24}(z-5)\Big), \\
&& \ds \eta_2(A_3^{-1}z)= \frac{e^{7 \pi i /12}}{2^{1/2}} (-3z-1)^{1/2} \, \sum_{n\geq 1}\dqu{12}{n} e\Big(\frac{n^2}{48}(z-5)\Big), \\
&& \ds \eta_3(A_3^{-1}z)=e^{ \pi i /3} (-3z-1)^{1/2} \, \sum_{n\geq 1}\dqu{12}{n}\exp\Big(\frac{n^2}{24}(3 z+1)\Big), \\
&& \ds \eta_4(A_3^{-1}z)= \frac{e^{17 \pi i /12}}{2} (-3z-1)^{1/2} \, \sum_{n\geq 1}\dqu{12}{n}e\Big(\frac{n^2}{96}(z-5)\Big), \\
&& \ds \eta_6(A_3^{-1}z)=\frac{e^{5 \pi i /12}}{2^{1/2}}(-3z-1)^{1/2} \, \sum_{n\geq 1}\dqu{12}{n}e\Big(\frac{n^2}{48}(3 z+1 )\Big), \\ \label{4_150}
&& \ds \eta_{12}(A_3^{-1}z)= \frac{e^{7 \pi i /12}}{2} (-3z-1)^{1/2} \, \sum_{n\geq 1}\dqu{12}{n}e\Big(\frac{n^2}{96}(3 z+1 )\Big).
\eeqar
From (\ref{1_50}) and (\ref{4_100})--(\ref{4_150}) we obtain the Fourier series expansions of $\varphi (z)$ 
and $\varphi (3z)$ at  the cusp $1/3$ as
\beqar
& \ds \varphi(A_3^{-1} z) &=  \frac{\eta_2^5 (A_3^{-1} z)}{\eta_1^2(A_3^{-1} z)\eta_4^2(A_3^{-1}z)} \nonumber \\ \label{4_160}
&& =\ds \frac{  \ds \frac{e^{3\pi i /4}}{2^{1/2}}  (-3z-1)^{1/2} \Big( \ds \sum_{n\geq 1}\dqu{12}{n}e\big(\frac{n^2}{48}(z-5)\big)\Big)^5}
{\Big( \ds \sum_{n\geq 1}\dqu{12}{n}e\big(\frac{n^2}{24}(z-5)\big) \Big)^2 
\Big( \ds \sum_{n\geq 1}\dqu{12}{n}e\big(\frac{n^2}{96}(z-5)\big) \Big)^2}, \\
& \ds \varphi(3 A_3^{-1}  z) &=  \frac{\eta_6^5 (A_3^{-1} z)}{\eta_3^2(A_3^{-1} z)\eta_{12}^2(A_3^{-1}z)} \nonumber  \\ \label{4_170}
&& =\ds\frac{  \ds   \frac{e^{\pi i /4}}{2^{1/2}} (-3z-1)^{1/2} \Big( \ds \sum_{n\geq 1}\dqu{12}{n}e\big(\frac{n^2}{48}(3 z+1)\big) \Big)^5}
{\Big( \ds \sum_{n\geq 1}\dqu{12}{n}e\big(\frac{n^2}{24}(3 z+1 )\big)\Big )^2
\Big ( \ds \sum_{n\geq 1}\dqu{12}{n}e\big(\frac{n^2}{96}(3 z+1 )\big) \Big)^2} .
\eeqar
Again by taking $A=A_4$ in  \cite[Theorem 1.7 and Proposition 2.1]{Kohler} and $L$ as 
\beqars
&&L_1:=\begin{bmatrix}
    -1       & 1  \\
    -4       & 3 
\end{bmatrix},~  
L_2:=\begin{bmatrix}
    -1       & 2  \\
    -2       & 3 
\end{bmatrix},~ 
 L_3:=\begin{bmatrix}
    -3       & 1  \\
    -4       & 1 
\end{bmatrix} ,\\
&&L_4:=\begin{bmatrix}
    -1       & 4  \\
    -1       & 3 
\end{bmatrix},~  
L_6:=\begin{bmatrix}
    -3       & 2  \\
    -2       & 1 
\end{bmatrix},~  
L_{12}:=\begin{bmatrix}
    -3       & 4  \\
    -1       & 1 
\end{bmatrix}
\eeqars
we obtain the Fourier series expansions of $\eta_r(z)$ for $r=1,2,3,4,6,12$ at the cusp $1/4$ as 
\beqar \label{4_180}
&& \ds \eta_1(A_4^{-1}z)=e^{13 \pi i /12}(-4z-1)^{1/2} \, \sum_{n\geq 1}\dqu{12}{n}e\Big(\frac{n^2}{24}(z+1)\Big),\\
&& \ds \eta_2(A_4^{-1}z)=e^{ \pi i /6} (-4z-1)^{1/2} \, \sum_{n\geq 1}\dqu{12}{n}e\Big(\frac{n^2}{12}( z+ 1)\Big),\\
&& \ds \eta_3(A_4^{-1}z)=\frac{e^{5 \pi i /12}}{3^{1/2}}(-4z-1)^{1/2}\,  \sum_{n\geq 1}\dqu{12}{n}e\Big(\frac{n^2}{72}(z+1)\Big),\\
&& \ds \eta_4(A_4^{-1}z)=e^{ \pi i /12} (-4z-1)^{1/2} \, \sum_{n\geq 1}\dqu{12}{n}e\Big(\frac{n^2}{6}(z+1)\Big),\\
&& \ds \eta_6(A_4^{-1}z)=\frac{e^{ \pi i /3}}{3^{1/2}}(-4z-1)^{1/2} \, \sum_{n\geq 1}\dqu{12}{n}e\Big(\frac{n^2}{36}(z+1)\Big),\\ \label{4_230}
&& \ds \eta_{12}(A_4^{-1}z)=\frac{e^{5 \pi i /12}}{3^{1/2}} (-4z-1)^{1/2} \, \sum_{n\geq 1}\dqu{12}{n}e\Big(\frac{n^2}{18}(z+1)\Big).
\eeqar
From (\ref{1_50}) and (\ref{4_180})--(\ref{4_230}) we obtain the Fourier series expansions of $\varphi (z)$ 
and $\varphi (3z)$ at  the cusp $1/4$ as
\beqar
& \ds \varphi(A_4^{-1} z) &=  \frac{\eta_2^5 (A_4^{-1} z)}{\eta_1^2(A_4^{-1} z)\eta_4^2(A_4^{-1}z)} \nonumber  \\ \label{4_240}
&& =\ds\frac{ e^{\pi i /2}  (-4z-1 )^{1/2} \Big( \ds \sum_{n\geq 1}\dqu{12}{n}e\big(\frac{n^2}{12}( z+1)\big) \Big)^5}
{\Big( \ds \sum_{n\geq 1}\dqu{12}{n}e\big(\frac{n^2}{24}(z+1)\big)\Big)^2
\Big( \ds \sum_{n\geq 1}\dqu{12}{n}e\big(\frac{n^2}{6}(z+ 1)\big)\Big)^2}, \\
& \ds \varphi(3 A_4^{-1}  z) &=  \frac{\eta_6^5 (A_4^{-1} z)}{\eta_3^2(A_4^{-1} z)\eta_{12}^2(A_4^{-1}z)} \nonumber  \\ \label{4_250}
&& =\ds\frac{  \ds\frac{1}{3} (-4z-1)^{1/2} \Big( \ds \sum_{n\geq 1}\dqu{12}{n}e\big(\frac{n^2}{36}(z+1)\big)\Big)^5}
{\Big( \ds \sum_{n\geq 1}\dqu{12}{n}e\big(\frac{n^2}{72}( z+ 1)\big) \Big)^2 
\Big( \ds \sum_{n\geq 1}\dqu{12}{n}e\big(\frac{n^2}{18}(z+1)\big)\Big)^2} .
\eeqar

If the Fourier series expansion of a modular form $f(z)$ of weight $k$ at the cusp $1/c$ is of the form
\beqar \label{4_260}
f(A_c^{-1}z)=(-cz-1)^k\sum_{n \geq 0} a_n e^{2 \pi i n z_c}, 
\eeqar
where $z_c\in \hh$ depends on $z$ and $A_c$,  
then we refer to $(-cz-1)^k a_0$ in (\ref{4_260})  as the ``first term'' of the Fourier series expansion of $f(z)$ at the cusp $1/c$.

For $k\geq 2$ an integer and $i$ an integer with $0 \leq i \leq 2k$ we give 
the first terms of the Fourier series expansions of  $\varphi^{4k-2i}(z) \varphi^{2i} (3z)$ at certain cusps in the following table. 
We deduce them (except for the cusps $1/2$ and $1/6$) from (\ref{4_80}), (\ref{4_90}), (\ref{4_160}), (\ref{4_170}), (\ref{4_240}) and (\ref{4_250}).
\begin{center}
\begin{longtable}{l | c  c  c  c  c  c}
\caption{First  terms of $\varphi^{4k-2i}(z) \varphi^{2i} (3z)$ at certain cusps}
\endhead
{\rm cusp} & $\infty$  &  $1$  &  $ 1/2$  &  $1/3$  &  $1/4$ & $1/6$ \\
\hline \\[-2mm]
{\rm term} & $ 1 $ & $\ds (-z -1)^{2k}\frac{(-1)^k}{2^{2k}3^i}$ & $ 0$ & $\ds (-3z-1)^{2k}\frac{(-1)^{i+k}}{2^{2k}} $ & $\ds (-4z-1)^{2k}\frac{ (-1)^i}{3^i} $ & $0$   
\end{longtable}
\end{center}
By (\ref{1_110}), we have 
\beqars
&&v_{1/2}(\varphi^{4k-2i}(z)\varphi^{2i}(3z))=3k-i > 0,~\\
&&v_{1/6}(\varphi^{4k-2i}(z)\varphi^{2i}(3z))=k+i > 0
\eeqars
for all $1\leq i \leq 2k$, that is, 
the first  terms of the Fourier series expansions of $\varphi^{4k-2i}(z)\varphi^{2i}(3z)$ at cusps $1/2$ and $1/6$ are  $0$. 
This completes Table 4.1.

The following theorem is an analogue of \cite[Proposition 2.1]{Kohler} for the Eisenstein series $E_{2k}(tz)$.
For convenience we set  $E_{(2k,t)}(z)=E_{2k}(tz)$ for $t \in \nn$. 

\begin{theorem} 
Let $k\geq 2$ be an integer and $t \in \nn$.  The Fourier series expansion of $E_{(2k,t)}(z)$ at the cusp $1/c\in \qq$ is 
\beqars
E_{(2k,t)}(A_c^{-1}z)=\Big(\frac{g}{t}\Big)^{2k}(-cz-1)^{2k} E_{2k}\Big(\frac{g^2}{t}z+\frac{y g}{t}\Big),
\eeqars
where $g=\gcd(t, c)$,  $y$ is some integer, and $A_c$ is given in {\em (\ref{4_10})}.
\end{theorem}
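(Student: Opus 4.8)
The plan is to deduce everything from the fact that, since $2k\ge 4$, the Eisenstein series $E_{2k}$ is a modular form of weight $2k$ on the full modular group $SL_2(\zz)$, so that $E_{2k}(\gamma w)=(c_\gamma w+d_\gamma)^{2k}E_{2k}(w)$ for every $\gamma=\begin{bmatrix}a_\gamma&b_\gamma\\c_\gamma&d_\gamma\end{bmatrix}\in SL_2(\zz)$. First I would compute $A_c^{-1}$ from {\em(\ref{4_10})}, getting $A_c^{-1}=\begin{bmatrix}-1&0\\-c&-1\end{bmatrix}$ and hence $A_c^{-1}z=z/(cz+1)$; therefore $E_{(2k,t)}(A_c^{-1}z)=E_{2k}\big(tz/(cz+1)\big)$, and the task reduces to rewriting this last expression by means of an $SL_2(\zz)$ substitution.

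Next I would set $g=\gcd(t,c)$ and write $t=gt'$, $c=gc'$ with $\gcd(t',c')=1$. Choosing, by B\'ezout, integers $y$ and $d$ with $t'd+c'y=1$, I form $\gamma=\begin{bmatrix}t'&-y\\c'&d\end{bmatrix}\in SL_2(\zz)$ and put $w=\frac{g^2}{t}z+\frac{yg}{t}=\frac{gz+y}{t'}$, which lies in $\hh$ since $g^2/t>0$. A short computation then gives $c'w+d=\frac{gc'z+1}{t'}=\frac{g(cz+1)}{t}$ and $t'w-y=gz$, whence $\gamma w=\frac{gz}{c'w+d}=\frac{tz}{cz+1}$. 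Applying the transformation law,
\[
E_{2k}\Big(\frac{tz}{cz+1}\Big)=E_{2k}(\gamma w)=(c'w+d)^{2k}E_{2k}(w)=\Big(\frac{g}{t}\Big)^{2k}(cz+1)^{2k}E_{2k}(w),
\]
and since $2k$ is even, $(cz+1)^{2k}=(-cz-1)^{2k}$, which is exactly the asserted identity.

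The argument is essentially bookkeeping, so the only place requiring care is the construction of $\gamma$: one must check simultaneously that $\det\gamma=1$ and that $\gamma w$ is the precise point $tz/(cz+1)$, not merely an $SL_2(\zz)$-equivalent one. The integer $y$ is determined only modulo $t'$ (equivalently, the B\'ezout pair $(y,d)$ is unique only up to $(y,d)\mapsto(y+mt',d-mc')$), but this is harmless: replacing $(y,d)$ by such a pair replaces $\gamma$ by $\gamma\begin{bmatrix}1&-m\\0&1\end{bmatrix}$ and $w$ by $w+m$, and $E_{2k}(w+m)=E_{2k}(w)$ since $E_{2k}$ has period $1$; this is why the statement only claims that $y$ is ``some integer.'' I do not anticipate any genuine obstacle beyond this sign-and-index verification.
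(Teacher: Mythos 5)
Your proposal is correct and follows essentially the same route as the paper: both reduce $E_{(2k,t)}(A_c^{-1}z)=E_{2k}\bigl(tz/(cz+1)\bigr)$ and then apply the weight-$2k$ modularity of $E_{2k}$ on $SL_2(\zz)$ (valid since $2k\geq 4$) with a B\'ezout matrix built from $t/g$ and $c/g$, landing on the argument $\frac{g^2}{t}z+\frac{yg}{t}$; your direct verification that $\gamma w=tz/(cz+1)$ is just a repackaging of the paper's factorization $E_{2k}(LL^{-1}\gamma z)$. The remark on the ambiguity of $y$ and the evenness of $2k$ handling the sign $(cz+1)^{2k}=(-cz-1)^{2k}$ are both sound.
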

\begin{proof}
The Fourier series expansion of $E_{(2k,t)}(z)$ at the cusp $ 1/c $ is given by 
the Fourier series expansion of $E_{(2k,t)}(A_c^{-1}z)$ at the cusp $\infty$. We have
\beqars
E_{(2k,t)}(A_c^{-1}z) =E_{(2k,t)}(\frac{-z}{-cz-1}) = E_{2k}(\frac{-tz}{-cz-1}) = E_{2k}(\gamma z),
\eeqars
where $\gamma = \begin{bmatrix}
    -t       & 0  \\
    -c       & -1 
\end{bmatrix}$. 
As $\gcd(t/g,c/g)=1$, there exist  $y,v\in \zz$ such that $\ds \frac{t}{g}(-v)+\frac{c}{g}y=1$. 
Thus $L := \begin{bmatrix}
    -t/g       & y  \\
     -c/g     & v 
\end{bmatrix} \in SL_2(\zz)$. 
Then for $k \geq 2$, we have
\beqars
& E_{(2k,t)}(A_c^{-1}z) &=E_{2k}(L L^{-1} \gamma z)\\
& &=\Big(-c\Big(\frac{(-vt +cy) z +y}{t}\Big) + v \Big)^{2k} E_{2k}\Big( \frac{(-vt +cy) z +y}{t/g}\Big)\\
& &=(g/t)^{2k}\Big(c\frac{vt-cy}{g}z + \frac{vt-cy}{g} \Big)^{2k} E_{2k}\Big( \frac{g^2 z + yg}{t}\Big)\\
& &=(g/t)^{2k}(-cz - 1 )^{2k} E_{2k}\Big( \frac{g^2 }{t}z + \frac{yg}{t}\Big),
\eeqars
which completes the proof.
\end{proof}

It folows from  Theorem 4.1 and (\ref{1_80}) that  the first term of the Fourier series expansion of $E_{2k}(tz)$ at the cusp $1/c$ is 
\beqar \label{4_270}
\Big(\frac{g}{t}\Big)^{2k} (-cz-1)^{2k} \Big( \frac{-B_{2k}}{4k} \Big).
\eeqar

\section{Proofs of Theorems 2.2 and 2.3}

Let $k\geq 2$ be an integer and $i$ an integer with $0 \leq i \leq 2k$. By (\ref{1_50}) 
we have 
\beqars
&& \ds \varphi^{4k-2i}(z)\varphi^{2i}(3z)=\frac{\eta^{20k-10i}(2z)}{\eta^{8k-4i}(z)\eta^{8k-4i}(4z)}\cdot \frac{\eta^{10i}(6z)}{\eta^{4i}(3z) \eta^{4i}({12z})}.
\eeqars
By Theorem 1.1, we have $\varphi^{4k-2i}(z)\varphi^{2i}(3z) \in M_{2k}(\Gamma_0(12))$. 
By Theorem 2.1(c), we have
\begin{eqnarray} \label{5_10}
\varphi^{4k-2i}(z)\varphi^{2i}(3z) = \sum_{r \mid 12} b_{(r,i,k)} E_{2k}(rz)  + \sum_{1\leq j \leq 4k-5}a_{(j,i,k)} C_{j,2k}(z)  
\end{eqnarray}
for some constants $b_{(1,i,k)}, b_{(2,i,k)}, b_{(3,i,k)}, b_{(4,i,k)}, b_{(6,i,k)}, b_{(12,i,k)}, a_{(1,i,k)}, \ldots, a_{(4k-5,i,k)}$. 
Since  $C_{1,k}(z), \ldots, C_{4k-5,k}(z)$ are cusp forms, the first terms of their Fourier series expansions  at all cusps are $0$. 

By appealing to (\ref{4_270}) and Table 4.1 we equate the first terms of the Fourier series expansions of (\ref{5_10}) in both sides at cusps $ \infty$, $1$, $1/2$, $1/3$, $1/4$, $1/6$  
to obtain the  system of linear equations
\beqars
 && b_{(1,i,k)} +  b_{(2,i,k)} + b_{(3,i,k)} + b_{(4,i,k)} + b_{(6,i,k)}  + b_{(12,i,k)} =  \frac{-4k}{B_{2k}} ,\\
 && b_{(1,i,k)} + \frac{b_{(2,i,k)}}{2^{2k}} + \frac{b_{(3,i,k)}}{3^{2k} } + \frac{b_{(4,i,k)}}{4^{2k}} + \frac{b_{(6,i,k)}}{6^{2k}} + \frac{b_{(12,i,k)}}{12^{2k}}
=  \frac{  (-1)^k }{2^{2k} 3^i}  \cdot \frac{-4k}{B_{2k}},\\
 && b_{(1,i,k)} +  b_{(2,i,k)}+ \frac{b_{(3,i,k)}}{ 3^{2k}} + \frac{b_{(4,i,k)}}{2^{2k} } + \frac{b_{(6,i,k)}}{3^{2k}} + \frac{b_{(12,i,k)}}{6^{2k}}=  0,\\
 && b_{(1,i,k)} +  \frac{b_{(2,i,k)} }{2^{2k}} + b_{(3,i,k)} +  \frac{b_{(4,i,k)} }{ 4^{2k}} +  \frac{b_{(6,i,k)}}{2^{2k}} + \frac{ b_{(12,i,k)}}{ 4^{2k}}
=   \frac{(-1)^{i+k}}{2^{2k}} \cdot \frac{-4k}{B_{2k}},\\
 && b_{(1,i,k)} +  b_{(2,i,k)} + \frac{b_{(3,i,k)}}{3^{2k}} +  b_{(4,i,k)} +  \frac{b_{(6,i,k)} }{3^{2k}} + \frac{b_{(12,i,k)} }{3^{2k}}
=\frac{  (-1)^{i} }{3^i} \cdot \frac{-4k}{B_{2k}},\\
 && b_{(1,i,k)} +  b_{(2,i,k)} + b_{(3,i,k)} + \frac{b_{(4,i,k)} }{ 2^{2k} } + b_{(6,i,k)}  + \frac{b_{(12,i,k)} }{ 2^{2k} }=0.
\eeqars
Solving the above system of linear equations, we obtain the asserted expressions for $b_{(r,i,k)}$ for $r=1,2,3,4,6,12$  in (\ref{2_30})--(\ref{2_80}). 
By  (\ref{2_10}), we have $[j]C_{j,k}(z) =1$ for each $j$ with $1 \leq j \leq 4k-5$. 
Equating the coefficients of $q^{ j  }$ in both sides of (\ref{5_10}) we obtain 
\beqars
N(1^{4k-2i},3^{2i};j)  = \sum_{r \mid 12} b_{(r,i,k)} \sigma_{2k-1} (j/r)  + \sum_{1\leq l \leq j-1}a_{(l,i,k)} c_{l,2k}(z) + a_{(j,i,k)}.
\eeqars
We isolate $a_{(j,i,k)}$ to complete the proof of Theorem 2.2. 
Finally, Theorem 2.3 follows from (\ref{1_60}), (\ref{1_80}), (\ref{2_10}) and Theorem 2.2.

\section{Examples and Remarks}

We now illustrate Theorems  2.2 and  2.3 by some examples.

\begin{example} 
We  determine $N(1^{6}, 3^{2};n)$ for all $n \in \nn$.  We take $k=2$  and $i=1$ in Theorems  {\em 2.2} and  {\em 2.3}.  
By {\em (\ref{2_30})--(\ref{2_80})} we have 
\beqar \label{6_10}
\left\{\begin{array}{l}
b_{(1,1,2)}= 28/5,~b_{(2,1,2)}=0,~b_{(3,1,2)}=-108/5,\\[1mm]
b_{(4,1,2)}=-448/5,~b_{(6,1,2)}=0,~b_{(12,1,2)}=1728/5.
\end{array}\right.
\eeqar
We compute  $N(1^{6}, 3^{2};n)$ for $n=1, 2, 3$ as $4k-5 =3$, and obtain
\beqar \label{6_20}
N(1^{6}, 3^{2};1)=12,~N(1^{6}, 3^{2};2)=60,~N(1^{6}, 3^{2};3)=164. 
\eeqar
By {\em (\ref{2_90})}, {\em (\ref{6_10})} and {\rm(\ref{6_20})},  we obtain
\beqars
a_{(1,1,2)}=32/5,~a_{(2,1,2)}=48,~a_{(3,1,2)}=576/5. 
\eeqars
Then, by Theorem {\em 2.3}, for all $n \in \nn$, we have 
\begin{eqnarray*}
N(1^6, 3^2;n) &=&   \frac{28}{5} \sigma_{3}(n) - \frac{108}{5} \sigma_{3}(n/3) -\frac{448}{5} \sigma_{3}(n/4) +  \frac{1728}{5} \sigma_{3}(n/12) \\
&&+ \frac{32}{5} c_{1,4}(n) +  48 c_{2,4}(n) + \frac{576}{5} c_{3,4}(n), 
\end{eqnarray*} 
which agrees with the known results, see for example \cite{alacaw}. We note that the last two cofficients in the above expression are different 
from the ones in  \cite{alacaw} since we used a different basis for the space of cusp forms.
\end{example}

\begin{example} 
We determine $N(1^4, 3^8;n)$ for all $n \in \nn$.  We take $k=3$ and $i=4$ in Theorems  {\em 2.2} and  {\em 2.3}.
By {\em (\ref{2_30})--(\ref{2_80})} we have 
\beqar \label{6_30}
\left\{\begin{array}{l}
b_{(1,4,3)}= 8/91,~b_{(2,4,3)}=0,~b_{(3,4,3)}=720/91,\\
b_{(4,4,3)}=-512/91,~b_{(6,4,3)}=0,~b_{(12,4,3)}=-4608/91.
\end{array}\right.
\eeqar
We compute  $N(1^{4}, 3^{8};n)$ for $n=1, 2, 3, 4, 5, 6, 7$ as $4k-5 =7$, and obtain
\beqar \label{6_40}
\left\{\begin{array}{l}
N(1^{4}, 3^{8};1)=8,~N(1^{4}, 3^{8};2)=24,~N(1^{4}, 3^{8};3)=48,\\
N(1^{4}, 3^{8};4)=152,~N(1^{4}, 3^{8};5)=432,\\
N(1^{4}, 3^{8};6)=720,~N(1^{4}, 3^{8};7)=1344. 
\end{array}\right.
\eeqar
By  {\em (\ref{2_90})},  {\em (\ref{6_30})} and {\em (\ref{6_40})}, we obtain
\beqars
&&a_{(1,4,3)}=720/91,~a_{(2,4,3)}=14880/91,~a_{(3,4,3)}=123376/91~a_{(4,4,3)}=40640/7,\\
&&a_{(5,4,3)}=1248448/91,~a_{(6,4,3)}=1551360/91,~a_{(7,4,3)}=792576/91. 
\eeqars
Then, by Theorem {\em 2.3}, for all $n \in \nn$, we have 
\begin{eqnarray*}
N(1^4, 3^8;n) &=&   \frac{8}{91} \sigma_{5}(n) + \frac{720}{91} \sigma_{5}(n/3) -\frac{512}{91} \sigma_{5}(n/4) 
-  \frac{46080}{91} \sigma_{5}(n/12) \\
&&+ \frac{720}{91} c_{1,6}(n) +  \frac{14880}{91} c_{2,6}(n) + \frac{123376}{91} c_{3,6}(n)  
+ \frac{40640}{7} c_{4,6}(n) \\
&&+ \frac{1248448}{91} c_{5,6}(n)  + \frac{1551360}{91} c_{6,6}(n) + \frac{792576}{91} c_{7,6}(n),
\end{eqnarray*} 
which agrees with the known results, see for example \cite{ayse-1}. 
\end{example}

\begin{example}  
Ramanujan \cite{ramanujan} stated a formula for 
$N(1^{2k},3^0,n)$, which was proved by Mordell in \cite{mordell}, see also \cite{cooper, aaw}. 
By taking  $i=0$ in Theorems {\em 2.2} and {\em 2.3}, we obtain  
\begin{eqnarray*}
N(1^{4k},3^0,n) &=& \frac{4k}{(2^{2k}-1)B_{2k}} \Big( (-1)^{k+1} \sigma_{2k-1}(n) + (1+(-1)^{k}) \sigma_{2k-1}(n/2)  \\
&&\hspace{30mm} -2^{2k} \sigma_{2k-1}(n/4) \Big)  + \sum_{1\leq j \leq 4k-5} a_{(j,0,k)} c_{j,2k}(n), 
\end{eqnarray*}
where $a_{(j,0,k)}$ and $c_{j,2k}(n)$ are given by {\em (\ref{2_90})} and {\em (\ref{2_10})}, respectively.
The coefficients of $\sigma$-functions in the above formula agree with those in   \cite[Theroem 1.1]{cooper} and \cite[Theroem 4.1]{aaw}. 
Different coefficients in the cusp part are due to the choice of our basis for the space $S_{2k}\big(\Gamma_0(12)\big)$ of cusp forms.
\end{example}

\begin{remark} 
Throughout the paper we assumed that $k \geq 2$. 
For $k=1$ we have {\em dim}$\big(S_{2}\big(\Gamma_0(12)\big)\big)=0$. A  basis for 
$M_{2}\big(\Gamma_0(12)\big) = E_{2}\big(\Gamma_0(12)\big)$ is given  
in \cite{alacaaygin}, see also \cite{williams, alaca-2, alaca-3}.
\end{remark}

\begin{remark} 
Let $k\geq 2$ be an integer. Let $N\in \nn$ and $\chi$  a Dirichlet character of modulus dividing $N$. 
We write $M_k(\Gamma_0(N), \chi)$ to denote the space of modular forms of weight $k$ with multiplier system $\chi$  for  $\Gamma_0(N)$, 
and $E_k (\Gamma_0(N), \chi)$ and $S_k(\Gamma_0(N), \chi)$ to denote the subspaces of Eisenstein forms and cusp forms 
of  $M_k(\Gamma_0(N), \chi)$, respectively.
We deduce from the formulae in \cite[Section 6.3, p. 98]{stein}  that 
\beqars
&&  {\rm dim} \big( S_{2k-1}\big(\Gamma_0(12),\chi_1\big)\big)=4k-7,\\
&&  {\rm dim}\big( S_{2k-1}\big(\Gamma_0(12),\chi_2\big)\big)=4k-6,\\
&& {\rm dim}\big( S_{2k}\big(\Gamma_0(12),\chi_3\big)\big)=4k-4, 
\eeqars
where 
\beqar \label{6_50}
\chi_1(m)=\dqu{-3}{m},~ \chi_2(m)=\dqu{-4}{m},~\chi_3(m)=\dqu{12}{m}
\eeqar
are Legendre-Jacobi-Kronecker symbols.
By appealing to a more general version of Theorem {\em 1.1 (Ligozat)}, see for example {\em \cite[Theorem 1.64]{onoweb}} and \cite[Corollary 2.3, p. 37]{Kohler}, 
we deduce that the families of eta quotients 
\beqars
&& \ds \big\{ C_{j,2k-1}(z) \big\}_{1\leq j \leq 4k-7} , \\[1mm]
&& \ds \Big\{ \frac{\eta^4(z)\eta(4z)\eta(12z)}{\eta^4(2z)\eta^2(6z)} C_{j,2k-1}(z) \Big\}_{1\leq j \leq 4k-6},\\[1mm]
&& \ds \Big\{ \frac{\eta^4(z)\eta(4z)\eta(12z)}{\eta^4(2z)\eta^2(6z)}C_{j,2k}(z)\Big\}_{1 \leq j \leq 4k-4} 
\eeqars
constitute a basis for $S_{2k-1}(\Gamma_0(12),\chi_1)$, $S_{2k-1}(\Gamma_0(12),\chi_2)$,  $S_{2k}(\Gamma_0(12),\chi_3)$, respectively, 
where $\chi_1,\chi_2,\chi_3$ are given in {\em (\ref{6_50})}. 
\end{remark}

\section*{Acknowledgments} 
The authors would like to thank Professor Shaun Cooper for bringing this research problem to our attention at CNTA XIII (2014). The research of the first two authors was supported 
by Discovery Grants from the Natural Sciences and Engineering Research Council of Canada (RGPIN-418029-2013 and RGPIN-2015-05208). Zafer Selcuk Aygin's studies are supported by Turkish Ministry of Education.

\noindent
School of Mathematics and Statistics\\
Carleton University\\
Ottawa, Ontario, Canada K1S 5B6

\vspace{2mm}

\noindent
e-mail addresses : \\
aalaca@math.carleton.ca\\
salaca@math.carleton.ca\\
zaferaygin@cmail.carleton.ca

\end{document}